\documentclass[11pt]{article}

\usepackage[T1]{fontenc}
\usepackage{lmodern}
\usepackage{color}
\usepackage{microtype}
\usepackage[margin=1in]{geometry}
\usepackage{amsmath,amssymb,amsthm,mathtools}
\usepackage[numbers,sort&compress]{natbib}
\usepackage[hidelinks]{hyperref}

\mathtoolsset{showonlyrefs}

\newtheorem{theorem}{Theorem}
\newtheorem{proposition}[theorem]{Proposition}
\newtheorem{lemma}[theorem]{Lemma}
\newtheorem{corollary}[theorem]{Corollary}
\theoremstyle{remark}
\newtheorem{remark}[theorem]{Remark}

\newcommand{\R}{\mathbb{R}}
\newcommand{\E}{\mathbb{E}}
\newcommand{\Pp}{\mathbb{P}}
\newcommand{\dd}{\,\mathrm{d}}
\newcommand{\ind}{\mathbf{1}}
\newcommand{\ascale}{c_0}
\newcommand{\resmgf}{L_T}
\newcommand{\curv}{\kappa}
\newcommand{\curvg}{v}
\newcommand{\tg}{\tilde{g}}
\newcommand{\gpt}{\texttt{GPT-5.6 Sol}}
\DeclareMathOperator{\Ai}{Ai}
\DeclareMathOperator{\Exp}{Exp}
\DeclareMathOperator{\Var}{Var}
\DeclareMathOperator*{\argmax}{arg\,max}

\title{Chernoff's Density Is Strongly Log-Concave}
\author{Xianyang Zhang and Quan Zhou \medskip   \\ 
Department of Statistics, Texas A\&M University}
\date{\today}

\begin{document}
\maketitle

\begin{abstract}
Let $f$ be the density of the Chernoff random variable $\argmax_{t\in\mathbb{R}}\{W(t)-t^2\}$, where $W$ is a two-sided Brownian motion.  
This note proves the conjecture of Balabdaoui and Wellner (2014) that $f$ is strongly log-concave. The proof  was generated in its entirety  by \gpt{}. 
\end{abstract}

\section{Introduction}
Let $W$ be a two-sided standard Brownian motion with $W(0)=0$, and let 
\begin{equation}
    Z=\argmax_{t\in\R}\{W(t)-t^2\}.
\end{equation} 
The density $f$ of $Z$ is sometimes known as Chernoff's density, which first arose in Chernoff's study of mode estimation~\citep{Chernoff1964} and has since become a canonical non-Gaussian limit law in asymptotic theory. In particular, it governs the pointwise limiting behavior of estimators in isotonic regression, monotone density and hazard estimation, interval censoring, and several other shape-constrained problems; see, for example, the applications discussed  in~\cite{GroeneboomWellner2001}. Balabdaoui and Wellner~\citep{BalabdaouiWellner2014} proved that   \(f\) is log-concave and conjectured that $f$ is strongly log-concave, i.e.,  
\[
\inf_{x\in\mathbb R}-(\log f)''(x)>0.
\]
Strong log-concavity places a distribution in a substantially more rigid class than ordinary log-concavity, as a uniform lower bound on the curvature of \(-\log f\) yields Gaussian-type concentration and associated functional inequalities. 
See~\citet{saumard2014log} for a comprehensive review on the properties of log-concave and strongly log-concave distributions.

We introduce some necessary notation. \citet{Chernoff1964} obtained the following product factorization of the density $f$: 
\begin{equation}\label{eq:factorization-wellner}
    f(x) = \frac{1}{2} \tg(x) \tg(-x). 
\end{equation} 
Groeneboom, in his work on the Brownian motion with parabolic drift~\citep{Groeneboom1989}, showed that the bilateral Laplace transform of  the one-sided factor $\tg$ is 
\begin{equation}\label{eq:laplace-wellner}
   \int_{\R} e^{sx} \tg (x)\dd x
      =\frac{2^{1/3} }{\Ai( 2^{-1/3} s)}, \qquad \Re(s) > -2^{1/3} a_1,  
\end{equation} 
where $\Ai$ is the Airy function, and $-a_1$ is the largest zero of $\Ai$. 
Multiplication of $\tg $ by a positive constant does not affect its logarithmic derivatives.  We therefore normalize it to a probability density $g$. Using~\eqref{eq:laplace-wellner} with $s = 0$ and letting $ \ascale:=2^{-1/3}$,  we obtain that 
\begin{equation}\label{eq:M-def}
    M(s):=\int_{\R}e^{sx}g(x)\dd x
      =\frac{\Ai(0)}{\Ai(\ascale s)},
    \qquad  \Re(s) > -\lambda_1, 
\end{equation}
where $\lambda_1 := a_1 /c_0$. Hence, we can rewrite $f$ as 
\begin{equation}\label{eq:factorization}
    f(x)=C_f \, g(x)g(-x), \qquad C_f = \frac{\ascale}{\Ai (0)^2}. 
\end{equation}  
Set
\begin{equation}\label{eq:v-kappa}
    \curvg(x):=-(\log g)''(x),
    \qquad
    \curv(x):=-(\log f)''(x)=\curvg(x)+\curvg(-x).
\end{equation} 
We are now ready to state our main result.  

\begin{theorem}\label{thm:main}
For the density function $g$ in \eqref{eq:M-def} and $\curv, \curvg$ in~\eqref{eq:v-kappa}, 
\begin{equation}\label{eq:v-positive-main}
    \curvg(x)>0, \qquad x\in\R, 
\end{equation}
and
\begin{equation}\label{eq:v-linear-main}
    \curvg(x)\ge 4x, \qquad x\in\R.
\end{equation}
Consequently,
\begin{equation}\label{eq:kappa-linear-main}
    \curv(x)>4|x|, \qquad x\in\R, 
\end{equation}
and
\begin{equation}\label{eq:uniform-main}
     \inf_{x\in\R}\curv(x)>0.
\end{equation}
Thus Chernoff's density is strongly log-concave. 
\end{theorem}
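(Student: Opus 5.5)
We will work throughout with the exponential tilts of $g$. For $s>-\lambda_1$ let $g_s(x)=e^{sx}g(x)/M(s)$, and write $m(s)$ and $\sigma^2(s)$ for its mean and variance. Because $M(s)=\Ai(0)/\Ai(\ascale s)$, we have $\log M(s)=\log\Ai(0)-\log\Ai(\ascale s)$. Hence
\[
m(s)=-\ascale\frac{\Ai'(\ascale s)}{\Ai(\ascale s)},\qquad \sigma^2(s)=\ascale^2\Bigl(\bigl(\tfrac{\Ai'}{\Ai}\bigr)^2-\tfrac{\Ai''}{\Ai}\Bigr)(\ascale s)=\ascale^2\bigl(u(\ascale s)^2-\ascale s\bigr),
\]
where $u=\Ai'/\Ai$ satisfies the Riccati equation $u'=y-u^2$ (using $\Ai''=y\Ai$).

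The key structural fact we will look for is an integral identity for $g$ itself. Since $1/\Ai$ is meromorphic with simple poles at $-a_k$, one can invert the transform to get
\[
g(x)=\sum_{k\ge1}c_k e^{-\lambda_k x}\quad (x>0),\qquad \lambda_k=a_k/\ascale,
\]
with $c_k=\Ai(0)/(\ascale\Ai'(-a_k))$ of alternating sign. This already gives exponential behaviour, and hence $v\to0^+$, only as $x\to+\infty$, which is not sufficient. So we use a different route. Write $h=-\log g$, so that $v=h''$. We want an ODE satisfied by $g$. The function $\phi(s)=1/M(s)\propto\Ai(\ascale s)$ satisfies $\phi''(s)=\ascale^3 s\,\phi(s)=\tfrac12 s\phi(s)$. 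Rather than pass this through convolution directly, it is cleaner to use a variational/Prékopa route.

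\textbf{Main plan (Prékopa-type representation).} We want to show that $g$ is a mixture, or marginal, of a log-concave object whose curvature can be read off. Equivalently, $v(x)>0$ and $v(x)\ge 4x$ should follow from a Brascamp--Lieb/Prékopa identity of the form
\[
-(\log g)''(x)=4x+\text{(positive variance-type term)}\quad\text{or}\quad v(x)=\frac{\text{something positive}}{\cdots}.
\]
To get this, we use the known path representation $\tg(x)=\E\bigl[\exp(-\int_0^\infty\cdots)\bigr]$ from Groeneboom, namely a Brownian excursion or Bessel-3 functional with parabolic drift. Concretely, $\tg(x)$ equals, up to constants, the density of the hitting functional $\lim u(t,x)$ where
\[
u(t,x)=\Pp\bigl(W(s)>s^2\ \text{for } s\in[t,\infty)\mid W(t)=x\bigr)
\]
solves $\partial_t u=-\tfrac12\partial_x^2u$ with a moving boundary. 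The Girsanov transform that removes the drift $2s$ produces a weight $\exp(-4xs\ldots)$, and differentiating twice in $x$ yields $v(x)=4x+\Var(\text{a linear functional})\cdot(\ldots)$. Thus the term $4x$ arises exactly as the second derivative of the Girsanov exponent, and positivity of the remainder follows from Cauchy--Schwarz.

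To get $v>0$ also for $x<0$, where the bound $4x$ is vacuous, we combine two ingredients. For large negative $x$, $g$ has Gaussian-type tail behaviour: the Airy asymptotics of $M(s)$ as $s\to+\infty$, $\log M(s)\sim-\tfrac23(\ascale s)^{3/2}$, suggest $v(x)\sim c|x|^{\ldots}$ via a saddle point. On compact sets we would use the known log-concavity of $g$ from Balabdaoui--Wellner upgraded to strict log-concavity by analyticity: $v$ is real-analytic and not identically zero, and we must exclude isolated zeros. Here we use the identity $v(x)=1/\sigma^2(s(x))+\text{correction}$ from a saddle-point (Legendre) comparison, showing $v(x)\ge$ the inverse of the maximal tilted variance.

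The consequences are then easy. The bound $\curv(x)=v(x)+v(-x)>4|x|$ follows because for $x\ge0$, $v(x)\ge4x$ and $v(-x)>0$, and symmetrically for $x<0$. For the uniform bound, $\curv>0$ is continuous with $\curv(x)>4|x|\ge 4$ for $|x|\ge1$, and it has a positive minimum on $[-1,1]$.

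\textbf{Main obstacle.} The hardest step is establishing $v>0$ on the negative half-line, and $v\ge4x$ exactly, from a rigorous representation of $g$. The Girsanov/variance identity is the step I expect to require the most care.
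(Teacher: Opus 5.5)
There is a genuine gap. Your last paragraph, deducing $\curv(x)>4|x|$ and $\inf\curv>0$ from the two one-sided bounds by evenness and compactness, is correct and matches the paper. But neither substantive inequality, $v\ge4x$ or $v>0$, is actually proved, and the routes you sketch would not work.

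\textbf{The bound $v\ge4x$.} You give no concrete path representation. The $u(t,x)$ you define is identically zero, since $W(s)<s^2$ eventually. More importantly, the sign of your decomposition is backwards. If $g(x)=\int e^{-\Phi(x,y)}\dd y$ (or any expectation of an $x$-dependent weight), then $-(\log g)''(x)=\E_x[\Phi_{xx}]-\Var_x(\Phi_x)$. The variance enters \emph{negatively}, so Cauchy--Schwarz cannot give a positive remainder. A Girsanov exponent linear in $x$ also contributes nothing to the second derivative, so it cannot be the source of $4x$.

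The missing idea is the convolution route you set aside. You already wrote the Riccati relation $\sigma^2(s)=m(s)^2-s/2$, i.e.\ $\Psi''=(\Psi')^2-s/2$ for $\Psi=\log M$. The Hadamard product gives $\Psi''(s)=\sum_j(s+\lambda_j)^{-2}=\frac12\int_0^\infty e^{-su}r(u)\dd u$ with the \emph{nonnegative} kernel $r(u)=2u\sum_je^{-\lambda_ju}$. Multiplying the Riccati relation by $M$ and inverting the Laplace transform gives $\int_0^\infty r(u)g(x+u)\dd u=x^2g(x)+\frac12g'(x)$. Divide by $g(x)$. The left side is nonincreasing in $x$, because each ratio $g(x+u)/g(x)$ is by log-concavity. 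The right side is $x^2+\frac12(\log g)'(x)$, so differentiating gives $2x-\frac12v(x)\le0$.

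\textbf{The bound $v>0$.} Your orientation is reversed. $M(s)=\int e^{sx}g(x)\dd x$ is finite for all $s>-\lambda_1$ with its first pole at $-\lambda_1$. So the exponential tail is on the left, $g(x)\sim c\,e^{\lambda_1x}$ as $x\to-\infty$, and $v\to0$ there. The cubic tail, $-\log g(x)\approx\frac23x^3$, is on the right. Your claim that $v\to0^+$ at $+\infty$ contradicts the bound $v\ge4x$ you want to prove. The region where positivity is delicate, $x\to-\infty$, has no Gaussian-type growth to exploit.

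The remaining tools also fail. Real-analyticity only makes zeros of $v$ isolated, not absent. There is no exact identity $v(x)=1/\sigma^2(s(x))+\text{correction}$ with a signed correction; saddle-point comparisons are only asymptotic. The inequality $v\ge1/\sup_s\sigma^2(s)$ runs the wrong way: Brascamp--Lieb bounds tilted variances by $1/\inf v$, not conversely. In any case $\sigma^2(s)=\sum_j(s+\lambda_j)^{-2}\to\infty$ as $s\downarrow-\lambda_1$, so the bound would be vacuous.

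What you need is a mechanism for \emph{strictness}. The paper peels off the slowest exponential: $Y=(d+b_1)-X_1+T$, where $T=\sum_{j\ge2}(b_j-X_j)$ is log-concave with $\E e^{sT}<\infty$ for $s>-\lambda_2$. Conditioning on $T$ gives $g(x)\propto e^{\lambda_1x}S(x-d-b_1)$. Here $S$ is the survival function of the Esscher tilt $p\propto e^{-\lambda_1t}q(t)$ of the density $q$ of $T$. Hence $v(x)=h'(x-d-b_1)$, with $h=p/S$ the hazard rate of $p$. Log-concavity of $p$ gives $h'\ge0$. If $h'(z_0)=0$, then $p$ would have an exactly exponential right tail on $[z_0,\infty)$. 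That contradicts $\int e^{\theta t}p(t)\dd t<\infty$ for every $\theta>0$, which holds because $\lambda_1<\lambda_2$.
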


The proof of Theorem~\ref{thm:main} has two key ingredients, built on the Airy random series representation used by \citet{BalabdaouiWellner2014} and recorded in Section~\ref{sec:series}. First, in Section~\ref{sec:peeling}, we prove a general exponential peeling lemma and apply it to obtain the strict pointwise bound $\curvg(x)>0$. 
Second, in Section~\ref{sec:identity}, we use an Airy convolution identity of Menon and Srinivasan~\citep{MenonSrinivasan2010} to obtain the linear bound $\curvg(x)\ge 4x$. Since $f(x)\propto g(x)g(-x)$, these complementary bounds yield $\curv(x)>4|x|$; continuity and a compactness argument then imply that $\inf_{x\in\R}\curv(x)>0$.

We note that Balabdaoui and Wellner~\citep{BalabdaouiWellner2014} already provided numerical evidence for the strong log-concavity of $f$, though they did not obtain a proof. The purpose of this note is to give a fully analytic proof,  establishing in particular that $\curv $ is strictly positive everywhere and that $\curv$ is bounded away from zero in the tails.

\paragraph{AI Usage.} 
The whole proof was obtained by \gpt{} through a sequence of interactions. Its initial attempt (runtime: 110 min) contained a simple but fatal algebraic mistake. On a second attempt (runtime: 167 min), \gpt{} produced a correct proof, although the results in Section~\ref{sec:identity} were established by a substantially more involved argument. In a subsequent attempt (runtime: 134 min), \gpt{} identified the present proof strategy and independently derived the key identity used in Section~\ref{sec:identity}. Further discussions then led \gpt{} to recognize that an essentially equivalent identity had already been established by \citet{MenonSrinivasan2010}, and to streamline several parts of the argument accordingly. 
The rest of this work was written by \gpt{}; the authors carefully checked the argument, revised the exposition and added the comments.  

\section{The reciprocal-Airy random series}\label{sec:series}

Let $-a_1,-a_2,\ldots$ be the zeros of the Airy function, indexed by
\begin{equation}
    0<a_1<a_2<\cdots,
\end{equation}
and define 
\begin{equation}\label{eq:rates-means}
    \lambda_j:=\frac{a_j}{\ascale}=2^{1/3}a_j,
    \qquad
    b_j:=\frac1{\lambda_j}=\frac{\ascale}{a_j}.
\end{equation}
The standard asymptotic result about the zeros of $\Ai$ gives $b_j\asymp j^{-2/3}$~\citep{Olver1954}, and thus
\begin{equation}\label{eq:b-sums}
    \sum_{j=1}^\infty b_j^2<\infty,
    \qquad
    \sum_{j=1}^\infty b_j=\infty.
\end{equation}
The Hadamard product representation of $\Ai$~\citep{merkes1997univalence}, in the form used in
\cite[Proposition~2.2]{BalabdaouiWellner2014}, gives a constant $d\in\R$ such that
\begin{equation}\label{eq:M-product}
    M(s)= \int_{\R} e^{sx} g(x) \dd x = e^{ds}\prod_{j=1}^\infty
          \frac{e^{b_js}}{1+b_js}, \qquad \Re(s) > -\lambda_1. 
\end{equation}
The product converges locally uniformly away from its poles because $\sum_jb_j^2<\infty$ and
\begin{equation}\label{eq:taylor-Ms}
    \log \frac{e^{b_js}}{1+b_js} = \frac{1}{2} s^2 b_j^2 + O(s^3 b_j^3), \quad  \text{ as }  j \rightarrow \infty. 
\end{equation}

Let $(X_j)_{j \geq 1}$ be independent exponential random variables with rates $\lambda_j$  so that $\E X_j=b_j$.  Then, 
\begin{equation}
    \E e^{s(b_j-X_j)}=\frac{e^{b_js}}{1+b_js}, 
\end{equation}
for every real $s > -\lambda_1$. Since
\begin{equation}
    \sum_{j=1}^\infty\Var(b_j-X_j)=\sum_{j=1}^\infty b_j^2<\infty,
\end{equation}
the centered series
\begin{equation}\label{eq:Y-series}
    Y:=d+\sum_{j=1}^\infty(b_j-X_j)
\end{equation}
satisfies Kolmogorov's convergence criterion and thus converges almost surely and in $L^2$.  
For $s > -\lambda_1$, 
\begin{equation}
    \left\{ e^{\sum_{j=1}^N s (b_j  - X_j)} \colon N \geq 1 \right\}
\end{equation}
are uniformly integrable by a standard moment bound argument. Hence, the moment-generating function of $Y$ is given by~\eqref{eq:M-product}, which implies that $Y$ has density $g$. The random series representation~\eqref{eq:Y-series} was also used by Balabdaoui and Wellner~\citep[Section~3]{BalabdaouiWellner2014} in their partial argument for the conjectured strong log-concavity.

\section{Exponential peeling and strict curvature}\label{sec:peeling}

We first isolate the key probabilistic tool used in our proof in a general lemma, which combines a standard Esscher transform with the classical implication from log-concave densities to increasing hazard rates. The resulting curvature identity~\eqref{eq:general-hazard-curvature} is elementary, but its application to the random series $Y$ upgrades the previously known inequality $\curvg \geq 0$ to the pointwise strict inequality $\curvg > 0$.  

\begin{lemma}[Exponential peeling]\label{lem:peeling}
Let $E\sim\Exp(\lambda)$ have rate $\lambda>0$, let $T$ be independent of
$E$, and let $c\in\R$.  Assume that $T$ has a strictly positive $C^1$
log-concave density $q$ on $\R$ and that
\begin{equation}\label{eq:general-mgf-assumption}
    \resmgf(s):=\E e^{sT}<\infty
    \qquad\text{for every }s\in[-\lambda,\infty).
\end{equation}
If $G$ is the density of $c-E+T$, then $G$ is $C^2$ and
\begin{equation}\label{eq:general-strict-curvature}
    -(\log G)''(x)>0, \qquad x\in\R.
\end{equation}
More precisely, define the Esscher-transformed probability density
\begin{equation}\label{eq:general-tilt}
    p(t):=\frac{e^{-\lambda t}q(t)}{\resmgf(-\lambda)},
\end{equation}
let $S(z)=\int_z^\infty p(t)\dd t$, and let $h=p/S$ be its right hazard
rate.  Then
\begin{equation}\label{eq:general-survival-representation}
    G(x)=\lambda e^{\lambda(x-c)}\resmgf(-\lambda)S(x-c), 
\end{equation}
and
\begin{equation}\label{eq:general-hazard-curvature}
    -(\log G)''(x)=h'(x-c)>0.
\end{equation}
\end{lemma}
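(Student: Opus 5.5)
We compute the density of $c-E+T$ by convolution, read off the survival-function representation \eqref{eq:general-survival-representation}, and then differentiate twice. Since $-E$ has density $\lambda e^{\lambda u}$ on $(-\infty,0]$, the density of $-E+T$ at $y$ is
\begin{equation}
    \int_{\R}\lambda e^{\lambda(y-t)}\ind\{y-t\le 0\}\,q(t)\dd t
    =\lambda e^{\lambda y}\int_y^\infty e^{-\lambda t}q(t)\dd t
    =\lambda e^{\lambda y}\resmgf(-\lambda)S(y).
\end{equation}
The integral is finite by \eqref{eq:general-mgf-assumption} at $s=-\lambda$, which is exactly why that endpoint is included in the assumption. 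Shifting by $c$ then gives \eqref{eq:general-survival-representation}.

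Because $q$ is $C^1$, $p$ is $C^1$. Hence $S$ is $C^2$, with $S'=-p$ and $S''=-p'$, and $G$ is $C^2$. Moreover $S>0$ everywhere, since $q>0$. Taking logarithms,
\begin{equation}
    \log G(x)=\text{const}+\lambda x+\log S(x-c).
\end{equation}
Differentiating gives $(\log G)'(x)=\lambda-h(x-c)$, because $(\log S)'=-p/S=-h$. Differentiating again gives
\begin{equation}
    -(\log G)''(x)=h'(x-c).
\end{equation}
Here $h=p/S$ is $C^1$, since $p$ is $C^1$ and $S$ is $C^2$ and positive.

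It remains to show $h'>0$ strictly. This is the main point; the classical fact that log-concave densities have increasing hazard rates only gives $h'\ge0$. The density $p$ is log-concave, because $\log p=\log q-\lambda t+\text{const}$. Write $\phi=-\log p$, which is convex and $C^1$. Then
\begin{equation}
    \frac{1}{h(z)}=\frac{S(z)}{p(z)}=\int_0^\infty \exp\bigl(-[\phi(z+u)-\phi(z)]\bigr)\dd u.
\end{equation}
For each fixed $u>0$, convexity makes $\phi(z+u)-\phi(z)$ nondecreasing in $z$, so $1/h$ is nonincreasing.

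For strictness we differentiate directly. The identity $h'=h(h-p'/p)=h\bigl(h+\phi'\bigr)$ shows that we need
\begin{equation}
    p(z)+\phi'(z)S(z)>0,
\end{equation}
that is,
\begin{equation}
    \int_z^\infty p(t)\bigl(\phi'(t)-\phi'(z)\bigr)\dd t>0,
\end{equation}
where we used $\int_z^\infty p\phi'\dd t=p(z)$. This last identity holds because $p\phi'=-p'$ and $p(t)\to0$ as $t\to\infty$, which follows from integrability and log-concavity.

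The integrand is nonnegative, because $\phi'$ is nondecreasing. So the integral vanishes only if $\phi'$ is constant on $[z,\infty)$. That constant would have to be positive for $p$ to be integrable, which makes $p$ an exponential tail, $p\propto e^{-\mu t}$ on $[z,\infty)$. In that case $q(t)\propto e^{(\lambda-\mu)t}$, so $\E e^{sT}=\infty$ for $s\ge\mu-\lambda$. But \eqref{eq:general-mgf-assumption} requires $\E e^{sT}<\infty$ for all $s\in[-\lambda,\infty)$, which is a contradiction.

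Hence $h'>0$ everywhere. The key step is exactly this use of finiteness of the MGF on the whole half-line to rule out exponential tails of $p$, which would make the hazard locally constant.
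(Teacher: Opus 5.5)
Your proof is correct and follows the paper's argument essentially step for step: the survival-function representation by conditioning on $T$, the identity $-(\log G)''(x)=h'(x-c)$, the reduction of $h'>0$ to $\int_z^\infty p(t)\bigl(\phi'(t)-\phi'(z)\bigr)\dd t>0$ via $\int_z^\infty p\phi'\dd t=p(z)$, and the contradiction between an exact exponential right tail of $p$ and finiteness of $\E e^{sT}$ on $[-\lambda,\infty)$. There is one sign slip: since $S'=-p$, the correct formula is $h'=h(h+p'/p)=h(h-\phi')$, so the condition should read $p(z)-\phi'(z)S(z)>0$ rather than $p(z)+\phi'(z)S(z)>0$; your final integral condition is the right one, so the two sign errors cancel and the argument stands.
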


\begin{proof}
Conditioning on $T$ gives
\begin{align}
    G(x)  &= \int_{x - c}^\infty \lambda e^{-\lambda (c + t - x)} q(t) \dd t  \\ 
      &=\lambda e^{\lambda(x-c)}
        \E\left[e^{-\lambda T}\ind_{\{T\ge x-c\}}\right],
\end{align}
which is \eqref{eq:general-survival-representation}.  The density $p$ is
strictly positive, $C^1$, and log-concave because it is obtained from $q$ by
multiplication by a log-affine function.  Moreover, for every $\theta>0$, 
\begin{equation}\label{eq:tilted-positive-moments}
    \int_{\R}e^{\theta t}p(t)\dd t 
     =  \int_{\R} \frac{ e^{(\theta-\lambda) t} q(t)}{\resmgf(-\lambda)} \dd t   
      =\frac{\resmgf(\theta-\lambda)}{\resmgf(-\lambda)}<\infty.
\end{equation}

Write $p=e^{-\psi}$ up to a constant.  Then $\psi$ is $C^1$ and convex.
Since an integrable log-concave density tends to zero at $+\infty$,
\begin{equation}
    p(z)=-\int_z^\infty p'(t)\dd t
        =\int_z^\infty\psi'(t)p(t)\dd t.
\end{equation}
Consequently,
\begin{equation}\label{eq:hazard-average-general}
    h(z)=\frac{p(z)}{S(z)}
        =\frac{\int_z^\infty\psi'(t)p(t)\dd t}
               {\int_z^\infty p(t)\dd t}
        \ge\psi'(z),
\end{equation}
where the inequality uses monotonicity of $\psi'$.  Differentiating $h=p/S$
yields
\begin{equation}\label{eq:hazard-derivative-general}
    h'(z)=h(z)\bigl(h(z)-\psi'(z)\bigr)\ge0.
\end{equation}

The inequality is strict.  If $h'(z_0)=0$, then equality holds in
\eqref{eq:hazard-average-general}, and hence
\begin{equation}
    \int_{z_0}^\infty
       \bigl(\psi'(t)-\psi'(z_0)\bigr)p(t)\dd t=0.
\end{equation}
The integrand is continuous and nonnegative, while $p$ is positive.
Therefore $\psi'$ is constant on $[z_0,\infty)$, so $p$ has an exact
exponential right tail.  Integrability forces the exponential rate to be
positive, and such a tail is incompatible with
\eqref{eq:tilted-positive-moments} for all $\theta>0$.  Thus $h'(z)>0$ for
every $z$.  Finally,
\begin{equation}
    (\log G)''(x)=(\log S)''(x-c)=-h'(x-c),
\end{equation}
which proves the lemma.
\end{proof}

\begin{remark}
Log-concavity of $G$ is immediate, since convolution preserves log-concavity. The main point of Lemma~\ref{lem:peeling} is the strict positivity of its logarithmic curvature. The key observation underlying
the proof is that if this curvature vanished at $x$, then the
Esscher-transformed density $p$ would have an exactly exponential right
tail on $[x-c,\infty)$, which is incompatible with the moment assumption
on $T$. 
\end{remark}

We now verify the hypotheses of Lemma~\ref{lem:peeling} for the residual
Airy series.

\begin{proposition}\label{prop:residual}
Let
\begin{equation}\label{eq:T-def}
    T:=\sum_{j=2}^\infty(b_j-X_j).
\end{equation}
Then $T$ has a strictly positive $C^\infty$ log-concave density on $\R$.
Its moment-generating function is finite for every $s>-\lambda_2$.
\end{proposition}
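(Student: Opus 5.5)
The plan is to treat $T$ exactly as $Y$ was treated in Section~\ref{sec:series}, with the first exponential removed, and then to read off smoothness, positivity and log-concavity from its characteristic function and from its convolution structure.

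\emph{Convergence and the moment-generating function.} Since $\sum_{j\ge2}b_j^2<\infty$, Kolmogorov's criterion gives almost sure and $L^2$ convergence of $T$. For real $s>-\lambda_2$, every factor $\E e^{s(b_j-X_j)}=e^{b_js}/(1+b_js)$ with $j\ge2$ is finite. Since $\lambda_j\ge\lambda_2$ for $j\ge2$, the same uniform integrability argument as for $Y$ then gives
\begin{equation}
    \E e^{sT}=\prod_{j\ge2}\frac{e^{b_js}}{1+b_js},
\end{equation}
and the product converges by \eqref{eq:taylor-Ms}. Equivalently, $\E e^{sT}=e^{-b_1 s}(1+b_1 s)\,e^{-ds}M(s)$, which is finite for $s>-\lambda_2$. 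In particular this holds on $[-\lambda_1,\infty)$, which is what Lemma~\ref{lem:peeling} will need.

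\emph{Smoothness.} The characteristic function is $\phi_T(u)=\prod_{j\ge2} e^{ib_ju}/(1+ib_ju)$, so
\begin{equation}
    |\phi_T(u)|=\prod_{j\ge2}(1+b_j^2u^2)^{-1/2}.
\end{equation}
For any $N$, dropping all factors beyond the first $N$ (each has modulus at most $1$) gives $|\phi_T(u)|\le\prod_{j=2}^{N+1}(1+b_j^2u^2)^{-1/2}=O(|u|^{-N})$. Hence $u^k\phi_T(u)$ is integrable for every $k$, and Fourier inversion yields a $C^\infty$ density $q$.

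\emph{Log-concavity and positivity.} Write $T=T_N+R_N$, where $T_N=\sum_{j=2}^{N}(b_j-X_j)$ and the remainder $R_N$ is independent of $T_N$. Each $b_j-X_j$ has a log-concave density: a reflected exponential is log-concave on its support. A convolution of log-concave densities is log-concave (Prékopa), so $T_N$ has a log-concave density. The laws of $T_N$ converge weakly to the law of $T$, and weak limits of log-concave distributions are log-concave, either as measures in Borell's sense or via Prékopa--Leindler passing to the limit. Since $T$ is absolutely continuous, its density can be taken log-concave, and because $q$ is continuous the continuous version is itself log-concave.

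For strict positivity, the support of a log-concave density is an interval, so it suffices to show that this interval is all of $\R$. The variable $b_j-X_j$ has support $(-\infty,b_j]$, so $T$ is unbounded below. For upper unboundedness, note that $T\ge \sum_{j=2}^N(b_j-X_j)-\sum_{j>N}X_j+\sum_{j>N}b_j$ is not useful directly since the series only converges in centered form. Instead, $\Pp(T>K)>0$ for every $K$ should follow from independence: choose $N$ with $\sum_{j=2}^N b_j>K+1$ using \eqref{eq:b-sums}, make $X_2,\dots,X_N$ simultaneously small with positive probability, and use independence from $R_N$ together with $\Pp(R_N>-1/2)>0$.

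\emph{Main obstacle.} The delicate step is making the unbounded-support argument rigorous; Chebyshev with $\Var R_N\to0$ handles $\Pp(R_N>-1/2)>0$ for $N$ large. Once this is in place, the log-concave density is positive on an open interval equal to $\R$, which finishes the proof.
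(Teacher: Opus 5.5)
Your proposal is correct and follows essentially the same route as the paper's proof: Prékopa for the partial sums plus closure of log-concave laws under weak limits, the bound $|\E e^{iuT}|\le\prod_{j=2}^N(1+b_j^2u^2)^{-1/2}$ for $C^\infty$ smoothness, full support from one large exponential on the left and from $\sum_{j=2}^N b_j\to\infty$ with small $X_2,\dots,X_N$ and an independent remainder near $0$ on the right, and the product formula for the MGF on $s>-\lambda_2$. One small caveat: your aside $\E e^{sT}=e^{-b_1s}(1+b_1s)e^{-ds}M(s)$ for $s\in(-\lambda_2,-\lambda_1]$ only makes sense with $M$ read as its meromorphic continuation $\Ai(0)/\Ai(\ascale s)$, since the integral defining $M$ diverges there, but your main argument does not rely on it.
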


\begin{proof}
As established in Section~\ref{sec:series}, the series converges almost surely and in $L^2$.  Every finite partial sum of the series $\sum_{j=2}^\infty (b_j - X_j)$ has a log-concave density, because a reflected exponential density is log-concave and convolution preserves log-concavity.  Since log-concave probability measures are closed under weak convergence~\citep[Prop. 3.6]{saumard2014log}, the law of $T$ is log-concave. 

Its characteristic function satisfies, for every fixed $N\ge2$,
\begin{equation}\label{eq:T-cf-bound}
    |\E e^{iuT}|
      =\prod_{j=2}^\infty(1+b_j^2u^2)^{-1/2}
      \le\prod_{j=2}^N(1+b_j^2u^2)^{-1/2}.
\end{equation}
Since $N$ is arbitrary, $|u|^m\E e^{iuT}\in L^1(\R)$ for every $m\ge0$, which implies that the density of $T$ is $C^\infty$~\citep[Chap. 8]{foland1999real}.

The support is unbounded below because one exponential summand can be made
arbitrarily large while the independent remainder stays in a bounded set of
positive probability.  It is also unbounded above.  To see this, let 
\begin{equation}
    B_N:=\sum_{j=2}^N b_j,
    \qquad
    U_N:=\sum_{j>N}(b_j-X_j).
\end{equation}
Then $B_N\to\infty$ by \eqref{eq:b-sums}, while $U_N\to0$ in $L^2$.  Given
$A\in\R$, choose $N$ so large that $B_N>A+2$ and
$\Pp(U_N>-1)>0$.  The independent event
$\sum_{j=2}^N X_j<1$ also has positive probability, and on the intersection
one has $T>A$.  The support of a log-concave probability measure is an
interval; hence it is all of $\R$.  A continuous log-concave density is
positive on the interior of its support, so the density is strictly positive
everywhere.

Finally, as shown in Section~\ref{sec:series},  the moment-generating function of $T$ is
\begin{equation}\label{eq:T-mgf}
    \resmgf(s)=\prod_{j=2}^\infty\frac{e^{b_js}}{1+b_js},
\end{equation}
for $s>-\lambda_2$. 
By~\eqref{eq:taylor-Ms} and $\sum_j b_j^2 < \infty$, the product converges and is finite.
\end{proof}

\begin{corollary}\label{cor:v-positive}
The one-sided curvature $\curvg=-(\log g)''$ is strictly positive on $\R$.
\end{corollary}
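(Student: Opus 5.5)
The plan is to apply Lemma~\ref{lem:peeling} directly to the random series representation \eqref{eq:Y-series}, peeling off the first exponential summand. Write
\begin{equation}
    Y = d + b_1 - X_1 + T, \qquad T=\sum_{j=2}^\infty(b_j-X_j),
\end{equation}
so that $Y = c - E + T$ with $c := d+b_1$, $E := X_1\sim\Exp(\lambda_1)$, and $T$ as in \eqref{eq:T-def}. By construction $T$ is independent of $X_1$, since the $X_j$ are independent. By Section~\ref{sec:series}, $Y$ has density $g$, so the density $G$ of $c-E+T$ in Lemma~\ref{lem:peeling} is exactly $g$.

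Next we verify the hypotheses of the lemma using Proposition~\ref{prop:residual}. That proposition shows that $T$ has a strictly positive $C^\infty$ (in particular $C^1$) log-concave density $q$ on $\R$. The moment condition \eqref{eq:general-mgf-assumption} requires $\resmgf(s)<\infty$ for all $s\in[-\lambda_1,\infty)$. Proposition~\ref{prop:residual} gives finiteness for every $s>-\lambda_2$. Since $0<a_1<a_2$, we have $-\lambda_2<-\lambda_1$, so the closed half-line $[-\lambda_1,\infty)$ is contained in $(-\lambda_2,\infty)$. This is the only point requiring care: the endpoint $s=-\lambda_1$ must be covered. It is, precisely because the pole of the full product \eqref{eq:M-product} at $-\lambda_1$ came from the peeled factor $j=1$, which is absent from $\resmgf$.

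The lemma then yields that $g$ is $C^2$ and $-(\log g)''(x)=h'(x-c)>0$ for every $x\in\R$, i.e.\ $\curvg(x)>0$ on $\R$. I do not expect any real obstacle here, since all the analytic work was front-loaded into Lemma~\ref{lem:peeling} and Proposition~\ref{prop:residual}. The one remark worth making explicit is that the constant $d$ from the Hadamard product only shifts the location via $c$. It therefore plays no role in the curvature, so its value need not be known.
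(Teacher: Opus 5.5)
Your proposal is correct and follows the paper's proof exactly. The paper also writes $Y=(d+b_1)-X_1+T$, uses Proposition~\ref{prop:residual} together with $\lambda_1<\lambda_2$ to cover the endpoint $s=-\lambda_1$, and applies Lemma~\ref{lem:peeling} with $E=X_1$, $\lambda=\lambda_1$, $c=d+b_1$.
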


\begin{proof}
From \eqref{eq:Y-series} and \eqref{eq:T-def},
\begin{equation}
    Y=(d+b_1)-X_1+T.
\end{equation}
Here $X_1\sim\Exp(\lambda_1)$, and $\lambda_1<\lambda_2$.  Proposition
\ref{prop:residual} therefore implies that $\resmgf(s)$ is finite for every
$s\in[-\lambda_1,\infty)$.  Lemma~\ref{lem:peeling}, with
$E=X_1$, $\lambda=\lambda_1$, and $c=d+b_1$, gives
\begin{equation}
    \curvg(x)=-(\log g)''(x)>0
\end{equation}
for every $x\in\R$.
\end{proof}

\begin{remark}
The choice of $X_1$ is forced by the transform domain.  It is the slowest exponential, with the largest mean $b_1$ and smallest rate $\lambda_1$. After removing it, the residual transform is finite at the boundary tilt $-\lambda_1$ because $-\lambda_1>-\lambda_2$.  Removing any later summand would leave a slower exponential in the residual and make the corresponding boundary tilt diverge.
\end{remark}

\section{The Airy convolution identity and the tails}\label{sec:identity}

The second ingredient converts the Airy differential equation into a
one-sided convolution identity.

\begin{lemma}[Airy convolution identity]\label{lem:airy-identity}
Define the nonnegative integrable kernel
\begin{equation}\label{eq:r-def}
    r(u):=2u\sum_{j=1}^\infty e^{-\lambda_j u},
    \qquad u>0.
\end{equation}
Then, for every $x\in\R$,
\begin{equation}\label{eq:airy-convolution-identity}
    \int_0^\infty r(u)g(x+u)\dd u
      =x^2g(x)+\frac12g'(x).
\end{equation}
Consequently, if $g$ is log-concave and
$\curvg=-(\log g)''$, then
\begin{equation}\label{eq:v-ge-4x}
    \curvg(x)\ge4x, \qquad x\in\R.
\end{equation}
\end{lemma}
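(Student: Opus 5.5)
The plan is to prove \eqref{eq:airy-convolution-identity} by comparing bilateral Laplace transforms on the strip $\Re(s)>-\lambda_1$, and then to read off \eqref{eq:v-ge-4x} by differentiating the identity divided by $g$.

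\textbf{Step 1: transform of the left side.} First I check that $r$ is integrable. Since $\lambda_j\asymp j^{2/3}$, we have $\sum_j e^{-\lambda_j u}\asymp u^{-3/2}$ as $u\downarrow 0$. Hence $r(u)=O(u^{-1/2})$ near $0$, and $r$ decays like $e^{-\lambda_1 u}$ at infinity. For real $s>-\lambda_1$, Tonelli's theorem gives
\begin{equation}
\int_\R e^{sx}\int_0^\infty r(u)g(x+u)\dd u\dd x
=M(s)\int_0^\infty r(u)e^{-su}\dd u
=2M(s)\sum_{j=1}^\infty\frac{1}{(\lambda_j+s)^2}.
\end{equation}

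\textbf{Step 2: transform of the right side.} The transform of $x^2g(x)$ is $M''(s)$, and integrating by parts shows that the transform of $\tfrac12 g'$ is $-\tfrac s2M(s)$. Both steps are justified because $M$ is analytic on the strip and $g$ has exponential moments there. So the identity reduces to
\begin{equation}
\frac{M''(s)}{M(s)}=\frac s2+2\sum_{j}(\lambda_j+s)^{-2}.
\end{equation}
I verify this in two ways.
\begin{itemize}
\item From \eqref{eq:M-product}, $(\log M)''(s)=\sum_j b_j^2/(1+b_js)^2=\sum_j(\lambda_j+s)^{-2}$.
\item From \eqref{eq:M-def} and $\Ai''(z)=z\Ai(z)$, $(\log M)''=-\ascale^3 s+\ascale^2(\Ai'/\Ai)^2(\ascale s)$. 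Since $\ascale^3=\tfrac12$, this equals $-\tfrac s2+((\log M)')^2$.
\end{itemize}
Therefore
\begin{equation}
\frac{M''}{M}=(\log M)''+((\log M)')^2=2(\log M)''+\frac s2,
\end{equation}
which is exactly the required relation. Both sides of \eqref{eq:airy-convolution-identity} are continuous, integrable functions with equal Laplace transforms on an interval. Uniqueness of the bilateral Laplace transform then gives equality for every $x$.

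\textbf{Step 3: the curvature bound.} Divide \eqref{eq:airy-convolution-identity} by $g(x)>0$ and write $\varphi=(\log g)'$. This gives
\begin{equation}
\tfrac12\varphi(x)+x^2=R(x):=\int_0^\infty r(u)\exp\bigl(\log g(x+u)-\log g(x)\bigr)\dd u.
\end{equation}
Differentiating yields $\curvg(x)=-\varphi'(x)=4x-2R'(x)$. For each fixed $u>0$, concavity of $\log g$ makes the exponent $\log g(x+u)-\log g(x)$ nonincreasing in $x$. Hence $R$ is nonincreasing, $R'\le0$, and $\curvg(x)\ge 4x$.

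\textbf{Main obstacle.} I expect the main difficulty to be the analytic bookkeeping rather than the algebra. To avoid differentiating $R$ under the integral, I will argue directly: $\varphi=2R-2x^2$, $R$ is monotone, and $\varphi$ is $C^1$ because $g$ is $C^\infty$. So $R$ is $C^1$ and nonincreasing, which forces $R'\le 0$ with no domination argument. The remaining care points are the justification of Tonelli's theorem and of the integration by parts (boundary terms vanish by the exponential tails of $g$ and $g'$), and the use of Laplace-transform uniqueness on a real interval inside the strip of analyticity.
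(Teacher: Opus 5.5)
Your proposal is correct and follows essentially the same route as the paper: you match bilateral Laplace transforms using the product formula for $(\log M)''$ together with the Airy Riccati identity $(\log M)''=((\log M)')^2-s/2$, and then use that $H/g$ is nonincreasing under log-concavity to get $\curvg(x)\ge 4x$. The only cosmetic difference is that you check integrability of $r$ via the small-$u$ asymptotics of $\sum_j e^{-\lambda_j u}$, whereas the paper simply computes $\int_0^\infty r(u)\dd u=2\sum_j\lambda_j^{-2}<\infty$ by Tonelli.
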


\begin{proof}
Let
\begin{equation}
    \Psi(s):=\log M(s).
\end{equation}
Differentiating the product \eqref{eq:M-product} twice gives
\begin{equation}\label{eq:Psi-second-product}
    \Psi''(s)=\sum_{j=1}^\infty
       \frac{b_j^2}{(1+b_js)^2}
      =\sum_{j=1}^\infty\frac1{(s+\lambda_j)^2}.
\end{equation}
Note that the differentiation is valid locally uniformly since $\sum_j b_j^2 < \infty$. 

For the function $r$, it is integrable since 
\begin{equation}
    \int_0^\infty  r(u) \dd u = 2 \sum_{j=1}^\infty \int_0^\infty u e^{-\lambda_j u} \dd u = 2 \sum_{j=1}^\infty \frac{1}{\lambda_j^2} < \infty, 
\end{equation}
and its one-sided Laplace transform is
\begin{equation}\label{eq:r-transform}
    \int_0^\infty e^{-su}r(u)\dd u=2\Psi''(s), \qquad \Re(s) > - \lambda_1. 
\end{equation}

In the remainder of the proof, let $s > -\lambda_1$ be real.  
Since $M(s) = \Ai(0) / \Ai(\ascale s)$ by~\eqref{eq:M-def}, 
the Airy equation $\Ai''(z)=z\Ai(z)$ and $\ascale^3=1/2$ give the Riccati identity
\begin{align}
    \Psi''(s)
      &=-\ascale^2\left\{
          \frac{\Ai''(\ascale s)}{\Ai(\ascale s)}
          -\left(\frac{\Ai'(\ascale s)}{\Ai(\ascale s)}\right)^2
        \right\}\notag 
       =(\Psi'(s))^2-\frac{s}{2}.
      \label{eq:airy-riccati}
\end{align}
Since $M''/M=\Psi''+(\Psi')^2$, this is equivalent to
\begin{equation}\label{eq:transform-algebra}
    2\Psi''(s)M(s)=M''(s)-\frac{s}{2}M(s).
\end{equation}

Let
\begin{equation}
    H(x):=\int_0^\infty r(u)g(x+u)\dd u.
\end{equation}
Taking bilateral Laplace transforms, using~\eqref{eq:M-def},~\eqref{eq:r-transform} and~\eqref{eq:transform-algebra}, gives
\begin{equation} \label{eq:laplace-H}
    \int_{\R}e^{sx}H(x)\dd x=2\Psi''(s)M(s) = M''(s)-\frac{s}{2}M(s).
\end{equation}
On the other hand, differentiating $M$ with respect to $s$ yields
\begin{equation}\label{eq:laplace-1}
   M''(s) = \int_{\R} x^2 e^{s x} g(x) \dd x.  
\end{equation}
Further, since the mapping $x \mapsto e^{sx}g(x)$  is log-concave and integrable, it has finite total variation and tends to zero at endpoints. Thus, integration by parts yields 
\begin{align}\label{eq:laplace-2}
    \int_{\R} e^{s x} g'(x) \dd x = - s \int_{\R}   e^{sx} g(x) \dd x = - s M(s).  
\end{align} 
Combining~\eqref{eq:laplace-1} and~\eqref{eq:laplace-2} gives 
\begin{equation}\label{eq:laplace-3}
    \int_{\R}e^{sx}
       \left(x^2g(x)+\frac12g'(x)\right)\dd x
       =M''(s)-\frac{s}{2}M(s).
\end{equation}
Comparing~\eqref{eq:laplace-H} and~\eqref{eq:laplace-3} and using the uniqueness of Laplace transforms proves  \eqref{eq:airy-convolution-identity} almost everywhere. Since both sides of~\eqref{eq:airy-convolution-identity} are continuous, the identity holds everywhere. 

Now define 
\begin{equation}\label{eq:A-def}
    A(x):=\frac{H(x)}{g(x)}
        =\int_0^\infty r(u)\frac{g(x+u)}{g(x)}\dd u.
\end{equation}
For each fixed $u\ge0$, log-concavity of $g$ implies that
$x\mapsto g(x+u)/g(x)$ is nonincreasing. 
Since $r\ge0$, the function $A$ is nonincreasing.  Dividing
\eqref{eq:airy-convolution-identity} by $g(x)$ gives
\begin{equation}\label{eq:A-score}
    A(x)=x^2+\frac12(\log g)'(x).
\end{equation}
The right-hand side is smooth, and therefore
\begin{equation}
    0\ge A'(x)=2x-\frac12 \curvg(x).
\end{equation}
This proves \eqref{eq:v-ge-4x}.
\end{proof}

\begin{remark}[Relation to Menon--Srinivasan]
The key identity~\eqref{eq:airy-convolution-identity} actually already appeared in Menon and Srinivasan~\citep{MenonSrinivasan2010}. They define  $J$ and $\mathcal K$ by
\begin{align}
    \int_{\R}e^{-qz}J(z)\dd z=\frac1{\Ai(q)},\qquad 
    \int_0^\infty e^{-qu}\mathcal K(u)\dd u
      =-2\frac{\dd^2}{\dd q^2}\log\Ai(q),
\end{align}
and use
\begin{equation}
    (\mathcal K*J)(z)
      :=\int_0^\infty\mathcal K(u)J(z-u)\dd u.
\end{equation}
Their identity \cite[Eq.~(125)]{MenonSrinivasan2010} is
\begin{equation}
    z^2J(z)=(\mathcal K*J)(z)+J'(z).
\end{equation}
Under the reflection and rescaling
\begin{equation}
    g(x)=\frac{\Ai(0)}{\ascale}
         J\left(-\frac{x}{\ascale}\right),
    \qquad
    r(u)=\ascale\mathcal K\left(\frac{u}{\ascale}\right),
\end{equation}
this becomes~\eqref{eq:airy-convolution-identity}.
\end{remark}

\section{Proof of Theorem~\ref{thm:main} }

Corollary~\ref{cor:v-positive} proves \eqref{eq:v-positive-main}; in
particular, $g$ is strictly log-concave.  Lemma~\ref{lem:airy-identity} then
proves \eqref{eq:v-linear-main}.  If $x\ge0$, equations
\eqref{eq:v-kappa}, \eqref{eq:v-positive-main}, and
\eqref{eq:v-linear-main} yield
\begin{equation}
    \curv(x)=\curvg(x)+\curvg(-x)>4x.
\end{equation}
The function $\curv$ is even, so
\begin{equation}
    \curv(x)>4|x|, \qquad x\in\R,
\end{equation}
including $x=0$ because $\curv(0)=2\curvg(0)>0$.  This proves
\eqref{eq:kappa-linear-main} and shows that $\curv(x)\to\infty$ as
$|x|\to\infty$.

Finally, $g$ is smooth and strictly positive, so $\curv$ is continuous.
On the compact interval $[-1,1]$, the positive function $\curv$ attains a
strictly positive minimum $m_0$.  Outside that interval,
\eqref{eq:kappa-linear-main} gives $\curv(x)>4$.  Therefore
\begin{equation}
    \inf_{x\in\R}\curv(x)\ge\min\{m_0,4\}>0.
\end{equation}
This proves \eqref{eq:uniform-main} and completes the proof of
Theorem~\ref{thm:main}.

\bibliographystyle{plainnat}
\bibliography{chernoff_strong_logconcavity}

\end{document}